\newtheorem{definition}{Definition}%[section]
\newtheorem{remark}{Remark}%[section]
\newtheorem{lemma}{Lemma}%[section]
\newtheorem{assumption}{Assumption}%[section]
\newtheorem{theorem}{Theorem}%[section]
\newtheorem{algorithm}{Algorithm}%[section]
\newtheorem{proposition}{Proposition}%[section]
\newtheorem{problem}{Problem}%[section]
\newtheorem{corollary}{Corollary}%[section]
\title{\LARGE \bf
Learning in Memristive Electrical Circuits
}
\author{H. M. Heidema, H. J. van Waarde, and B. Besselink
\thanks{The authors are with the Jan C. Willems Center for Systems
and Control, and the Bernoulli Institute for Mathematics, Computer
Science, and Artificial Intelligence, University of Groningen, The Netherlands. 
Marieke Heidema and Bart Besselink are also with CogniGron (Groningen Cognitive Systems and Materials Center), University of Groningen, The Netherlands. Marieke Heidema and Bart Besselink acknowledge the financial support of the CogniGron research center and the Ubbo Emmius Funds. 
Henk van Waarde acknowledges financial support by the Dutch Research Council under the NWO Talent Programme Veni Agreement (VI.Veni.222.335). Email: {\tt\small h.m.heidema@rug.nl; 
h.j.van.waarde@rug.nl; b.besselink@rug.nl.}}
}
\begin{document}
\mathtoolsset{showonlyrefs}

\maketitle
\thispagestyle{empty}
\pagestyle{empty}

\begin{abstract}
Memristors are nonlinear two-terminal circuit elements whose resistance at a given time depends on past electrical stimuli. 
Recently, networks of memristors have received attention in neuromorphic computing since they can be used as a tool to perform linear algebraic operations, like matrix-vector multiplication, directly in hardware.
In this paper, the aim is to resolve two fundamental questions pertaining to a specific, but relevant, class of memristive circuits called crossbar arrays.
In particular, we show (1) how the resistance values of the memristors at a given time can be determined from external (voltage and current) measurements, and (2) how the resistances can be steered to desired values by applying suitable external voltages to the network. 
The results will be applied to solve a prototypical learning problem, namely linear least squares, by applying and measuring voltages and currents in a suitable memristive circuit.

\end{abstract}

\section{INTRODUCTION}
As computing technology has gotten more complex over the past decades, one of the main aims has always been to lower its energy consumption. 
In the search for more energy-efficient computers, the potential of neuromorphic computing has been studied \cite{Ribar2021}, \cite{Indiveri2015}. 
This is a method of designing analog information processing systems by modeling them after the nervous system in the brain. 
Such computing technology could lead to a significant reduction in energy consumption, when compared to existing digital computers, see \cite{Mead1990}. 

To this end, there is a need for devices that behave like synapses in order to emulate biological neural networks \cite{Thomas2013}, \cite{Snider2011},  \cite{Khalid2019}, and \cite{Sah2014}. 
Memristors are suitable candidates for synapses, as they have nonlinear dynamics with memory.   
In this study, we will therefore analyze electrical circuits with memristors. Memristors were introduced by Chua \cite{Chua1971} as the fourth electrical circuit element. 
They can be regarded as resistors for which the instantaneous resistance depends on past external stimuli  and that, in  the absence of external stimuli, retain their resistance value.  When regarding this resistance as the memory of a memristor, a change in resistance can be seen as learning. Since this mimics the learning processes in biological synapses, memristive materials can act as synapses in neuromorphic computing systems. 

Past research on memristors introduced the notions of charge- and flux-controlled memristors, and discussed their passivity properties \cite{Corinto2016}, \cite{Corinto2021}. Furthermore, past research investigates monotonicity of (memristive) circuits \cite{Chaffey2024} and monotonicity in relation to passivity \cite{Corinto2015}. 
These studies present a way of modeling electrical circuits with memristors, using graph theory and Kirchhoff's laws \cite{Huijzer2023}. 
Here, we will make use of this to analyze memristive circuits. 

In particular, we will be studying memristive crossbar arrays. These are networks consisting of row and column bars, with memristors on the cross-points. 
Crossbar arrays have been studied in the case of resistors \cite{Sun2019} and memristors \cite{Sebastian2020}. Here, it has been found that this network structure can be used to compute matrix-vector products, where the (instantaneous) resistance values of the elements correspond to the entries of the matrix.
In the case of a resistive crossbar array, this means that the matrix we can perform matrix-vector products with is fixed. However, in the case of memristive crossbar arrays, the memristors can change their resistance value based on external stimuli, giving a set of matrices with which to compute matrix-vector products. 

What is missing from the past research, is a way to steer the resistance values of the memristors  so that they coincide with the entries of a given matrix. 
To this end, the contributions of this paper are as follows: (1) we define memristors and introduce memristive crossbar arrays; (2) we show how one can determine (\textit{read}) the resistance values of the memristors at a given time in such a circuit, based on voltage and current measurements; (3) we show how one can steer (\textit{write}) the resistance values of the memristors in such a circuit to desired values, by application of external stimuli to the network. Together, these results enable us to compute matrix-vector products and to solve least squares problems using memristive crossbar arrays. 

The remainder of this paper is organized as follows. In Section II, we introduce memristors and memristive crossbar arrays. Section III will then concern reading the instantaneous resistance (conductance) values of the memristors. Section IV deals with writing the resistance (conductance) values of the memristors. Section V will concern applications of our theory to matrix-vector multiplication and solving least-squares problems. This paper then ends with  a conclusion and discussion in Section VI.

\subsubsection*{Notation} We denote the element in row $k$ and column $l$ of $A\in\mathds{R}^{m\times n}$ as $A_{kl}$ or $[A]_{kl}$. The Kronecker product of two matrices $A\in\mathds{R}^{m\times n}$ and $B\in\mathds{R}^{p\times q}$ is denoted by $A\otimes B \in\mathds{R}^{mp\times nq}$. 
The vectorization of a matrix $C\in\mathds{R}^{m\times n}$, denoted by $\text{vec}(C)$, is the vector $\text{vec}(C)=\begin{bmatrix} c_1^\top & c_2^\top & \hdots & c_n^\top \end{bmatrix}^\top\in\mathds{R}^{mn}$, where $c_l\in\mathds{R}^m$ is the $l$-th column of $C$. 
Let $D_1, D_2,\ldots,D_r$ be real matrices. The matrix with block-diagonal entries $D_1, D_2,\ldots,D_r$ is denoted by $\text{diag}(D_1,D_2,\hdots, D_r)$.
The column vector of size $m$ with all ones is denoted by $\mathds{1}_m$. 
The image of a linear map $F:\mathds{R}^n\rightarrow \mathds{R}^m$ is denoted by $\text{im}(F)$.
 The set of all matrices $M\in\mathds{R}^{m\times n}$ with positive entries is denoted by $\mathds{R}_{>0}^{m\times n}$.

\section{MEMRISTOR NETWORKS}
\subsection{Memristors}
In this paper, we consider networks of memristors. A memristor $M_{kl}$ is a two-terminal electrical circuit element, originally postulated by Chua \cite{Chua1971}. It provides a relation between the (magnetic) flux $\varphi_{kl}$ and (electric) charge $q_{kl}$, which satisfy \vspace{-0.2cm}
\begin{equation}
    \tfrac{d}{dt} \varphi_{kl} = V_{kl}, \quad \tfrac{d}{dt} q_{kl} = I_{kl},
\end{equation}
with $V_{kl}$ the voltage across and $I_{kl}$ the current through the memristor. In particular, we consider so-called flux-controlled memristors of the form
\begin{align}
     q_{kl} &= g_{kl}(\varphi_{kl}), \label{eqn:flux controlled}
\end{align} 
for some function $g_{kl}:\mathds{R}\rightarrow\mathds{R}$. 
\begin{assumption}\label{assumption 1}
    The function $g_{kl}(\cdot)$ is continuously differentiable, and strictly monotone, i.e.,
    $$(g_{kl}(\varphi)-g_{kl}(\varphi')) (\varphi-\varphi')>0$$
for all $\varphi,\varphi'\in\mathds{R}$ such that $\varphi\neq \varphi'$. 
\end{assumption}

Taking the derivative of \eqref{eqn:flux controlled} with respect to time, we obtain the following dynamical system describing the memristor
\begin{equation}
\begin{aligned}
    \tfrac{d}{dt} \varphi_{kl}(t) &= V_{kl}(t),  \quad 
   I_{kl}(t) = W_{kl}(\varphi_{kl}(t)) V_{kl}(t), \label{eqn:flux controlled v2}
\end{aligned}
\end{equation} 
where $W_{kl}(\varphi_{kl}) =  \tfrac{dg_{kl}(\varphi_{kl})}{d\varphi_{kl}}$. Here, we will refer to $W_{kl}(\varphi_{kl}(t))$ as the \textit{memductance} of the memristor at time $t$. Note that, by Assumption \ref{assumption 1}, the memductance $W_{kl}(\varphi)$ is positive for all $\varphi\in\mathds{R}$. 
\begin{assumption}\label{assumption 2}
    The function $W_{kl}(\cdot)$ is strictly monotone and $\beta-$Lipschitz continuous, where the latter means that
    \begin{equation}
        |W_{kl}(\varphi)-W_{kl}(\varphi')| \leq \beta |\varphi-\varphi'|
    \end{equation}
for some $\beta>0$ and all $\varphi,\varphi'\in\mathds{R}$. 
\end{assumption}

    Note that \eqref{eqn:flux controlled v2} is similar to the description of a linear resistor, only the conductance is not constant but depends on past external stimuli such as the voltage over the memristor.     
    {The changes in memductance due to external stimuli are what we view as a learning process.}

\begin{remark}
    Assumption \ref{assumption 1} implies passivity of the memristor in \eqref{eqn:flux controlled v2}, see Theorem 6 in \cite{Chua1971}. Furthermore, the HP memristor \cite{Strukov2008}, a frequently used example of a physical memristor, satisfies both Assumptions \ref{assumption 1} and \ref{assumption 2}.  
\end{remark}

\subsection{Memristive crossbar arrays} 
We now consider a network of memristors $M_{kl}$ with the structure depicted in Figure \ref{fig:memristive crossbar array}. In particular, the memristors are arranged as an array with $m$ rows and $n$ columns, where we denote the memristor connecting row $k$ with column $l$ by $M_{kl}$ and its flux by $\varphi_{kl}$. 
 Furthermore, with each memristor $M_{kl}$, we associate a switch (also called selector) $S_{kl}$, for $k\in\{1,\ldots, m\}$ and $l\in\{1,\ldots, n\}$. 
\begin{remark}
    In Section IV.A, we will discuss the need to include switches in the memristive crossbar array, contrary to resistive crossbar arrays studied in \cite{Sun2019}.
\end{remark}

\begin{figure}[ht]
    \centering
    \includegraphics[width=0.38\textwidth]{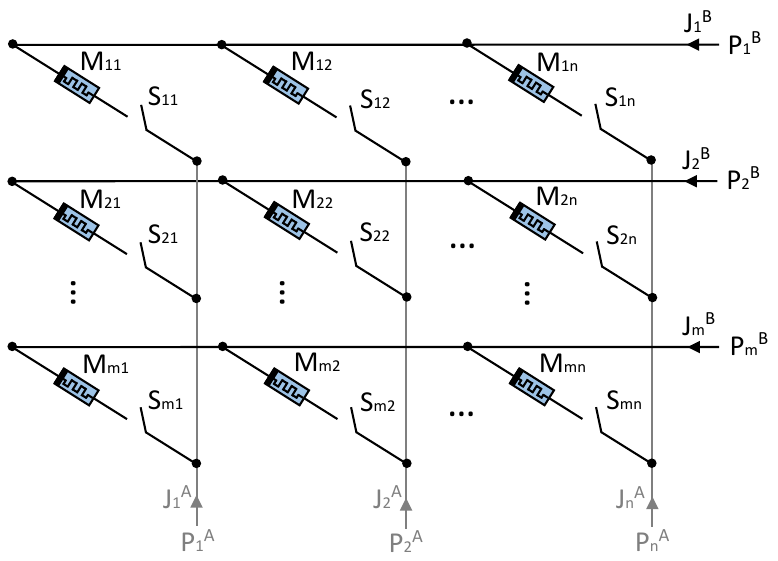}
    \caption{Memristive crossbar array with switches.}
    \label{fig:memristive crossbar array}
\end{figure}

In order to model these memristive crossbar arrays, we collect the voltages $V_{kl}$ and currents $I_{kl}$ associated with the memristors as
\begin{align}
    V &= \begin{bmatrix} V_{11} & V_{21} &\cdots & V_{m1}  & V_{12} & \cdots &  V_{mn} \end{bmatrix}^\top, \label{eqn: V} \\
    I &= \begin{bmatrix} I_{11} & I_{21} &\cdots & I_{m1}  & I_{12} & \cdots &  I_{mn} \end{bmatrix}^\top. \label{eqn: I}
\end{align}
Here, we have omitted the argument $t$ for simplicity. 
Furthermore, $P(t)\in \mathds{R}^{m+n}$ and $J(t)\in \mathds{R}^{m+n}$ respectively  denote the vector of voltage potentials at and currents through the (external) terminals. Again omitting the argument $t$, they are given by
\begin{align}
    P &= \begin{bmatrix} P_1^A  &  \cdots & P_n^A & P_1^B  &  \cdots & P_m^B\end{bmatrix}^\top, \\ 
    J &= \begin{bmatrix} J_1^A  &  \cdots & J_n^A & J_1^B &  \cdots & J_m^B\end{bmatrix}^\top.
\end{align}

Using the Kronecker product, the \textit{incidence matrix} corresponding to this network of memristors with all switches being closed, can easily be found to be given by
   \begin{equation}\label{eqn: incidence matrix}
       D = \begin{bmatrix}
       I_n\otimes \mathds{1}_m^\top \\
       -\mathds{1}_n^\top \otimes I_m
   \end{bmatrix} \in \mathds{R}^{(n+m)\times mn}. 
   \end{equation}  
In case not all switches are closed, the incidence matrix can be written of the form $\Bar{D}=DS(t),$
where $S(t)$ is an $mn$ by $mn$ diagonal matrix, which has ones and zeroes on its diagonal, depending on the switches $S_{kl}$ being open or closed at time $t$. In particular, the matrix $S$ is given by 
\begin{equation}
    S=\text{diag} \begin{pmatrix}
    s_{11},\ s_{21}, \ \ldots, \ s_{m1}, \ s_{12}, \ \ldots, \ s_{mn}
\end{pmatrix},
\end{equation} 
where $s_{kl}\in\{0,1\}$ is defined as
\begin{equation}
    s_{kl} = \left\{
	\begin{array}{ll}
		1  & \mbox{if switch } S_{kl} \text{ is closed}, \\
		0 & \mbox{if switch } S_{kl} \text{ is open.}
	\end{array}
\right.
\end{equation}

Now, Kirchhoff's current and voltage law say that
\begin{equation}
    J=DSI \quad \text{and} \quad V=S D^{\top} P. \label{eqn: 3 laws}
\end{equation} 
Denote the vector of fluxes over the memristors by
\begin{equation}
    \varphi = \begin{bmatrix} \varphi_{11} & \varphi_{21} & \cdots & \varphi_{m1}  & \varphi_{12} &  \cdots & 
 \varphi_{mn} \end{bmatrix}^\top.
\end{equation} 
In the remainder of this paper, we let $\varphi_{kl}$ refer to its $(k+m(l-1))$-th element. Now, write 
\begin{align}
    W(\varphi)=\text{diag} &\left(W_{11}(\varphi_{11}), \  W_{21}(\varphi_{21}), \ \cdots, W_{m1}(\varphi_{m1}), \right. \\
    & \ \left. W_{12}(\varphi_{12}), \ \cdots, \ W_{mn}(\varphi_{mn}) \right),
\end{align}
then the dynamics of the memristors in the array is given by
\begin{equation}
\begin{aligned}
    \tfrac{d}{dt} \varphi &= V, \quad    I=W(\varphi)V. \label{eqn:general crossbar array v2}
\end{aligned}
\end{equation}
Combining \eqref{eqn: 3 laws} and \eqref{eqn:general crossbar array v2}, we find that the dynamics of the memristive crossbar array with switches is given by
\begin{equation}
\begin{aligned}
    \tfrac{d}{dt} \varphi &= S D^\top P, \quad J=DSW(\varphi)S D^\top P. \label{eqn:general crossbar array}
\end{aligned}
\end{equation}

\begin{remark}
    From the dynamics of the memristive crossbar array, note that \textit{any} $\Bar{\varphi}\in\mathds{R}^{mn}$ is an equilibrium point of \eqref{eqn:general crossbar array} for $P=0$. In other words, when there are no external stimuli, the value $W_{kl}(\Bar{\varphi}_{kl})$ is maintained. 
    This means that the memristive array can be used to store information.
\end{remark}

\subsection{Problem statement} 
Viewing the memductance values $W_{kl}(\varphi_{kl})$ as stored information, we would like to be able to retrieve those values on the basis of an experiment on the terminals, i.e., using functions $P$ and $J$. This leads to the following definition.
\begin{definition}
    Given $S\in \{0,1\}^{mn\times mn}$ and $P,J:[0,T]\rightarrow \mathds{R}^{n+m}$, the \textit{set of consistent memductance matrices} is given by \vspace{-0.1cm}
    \begin{align}
       &\mathcal{W}(P,J):= \left\{ W\in\mathds{R}^{mn\times mn} \ | \ \exists \varphi :[0,T]\rightarrow \mathds{R}^{mn}  \right. \\
        &\  \text{ such that } (P,\varphi,J) \text{ satisfy } \text{\eqref{eqn:general crossbar array} with } \left. W( \varphi(0))=W   \right\}. 
    \end{align} 
\end{definition} \vspace{0.1cm}

Hence, the set of consistent memductance matrices consists of all matrices with initial memductance values that agree with the given terminal behavior $P$ and $J$. 

In the remainder of this paper, we assume that the voltage potentials can be chosen directly through (controlled) voltage sources. Hereto, we note that \eqref{eqn:general crossbar array} can be regarded as a dynamical system with state trajectory $\varphi$, input $P$ and output $J$. Here, $\varphi$ depends solely on $P$ and the initial condition $\varphi(0)=\varphi_0$, which we denote as $\varphi_{\varphi_0,P}$. Furthermore, we denote the corresponding currents by $J_{\varphi_0,P}$ and assume them to be measurable.
 This allows us to state the following: 

\begin{problem}[Reading Problem]\label{reading problem}
    Let $S=I$. Find $T\geq 0$ and $P:[0,T]\rightarrow \mathds{R}^{n+m}$ such that, for any $\varphi_0$,
    \begin{enumerate}
        \item $\mathcal{W}(P,J_{\varphi_0,P})$ is a singleton;
        \item $\varphi_{\varphi_0,P}(T) = \varphi_0$.
    \end{enumerate}
\end{problem} 

So, the reading problem is to choose the terminal behavior $P$ on the time-interval $[0,T]$ in such a way that, together with the corresponding current $J_{\varphi_0,P}$ that we measure on that time-interval, we can uniquely determine the initial memductance matrix for any $\varphi_0$. In addition, we want $P$ to be such that the memductance matrix at the end of our experiment, i.e., $W(\varphi(T))$, is equal to the memductance matrix from before the experiment, i.e., $W(\varphi_0)$.

Apart from \textit{reading} the memductance values at a given time, we also want to be able to steer these to desired values. This is formalized below as the \textit{writing problem}.
\begin{problem}[Writing Problem] \label{writing problem}
    Let $W_d\in\mathds{R}_{>0}^{mn\times mn}$ be a diagonal matrix of desired memductance values and let $\epsilon>0$. Then, for any $\varphi_0\in\mathds{R}^{mn}$, find $T\geq 0$,  $P:[0,T]\rightarrow \mathds{R}^{n+m}$, and $S:[0,T]\rightarrow \{0,1\}^{mn\times mn}$
    such that 
    $$\bigl|W_{d,kl} - [W(\varphi_{\varphi_0,P}(T))]_{kl}\bigr|\leq \epsilon$$
    for all $k\in\{ 1,\hdots,m\}$ and $l\in\{1,\hdots,n\}$.    
\end{problem}

\section{READING}
In this section, we assume that all the switches are closed, i.e., $s_{kl}=1$ for all $k\in\{ 1,\hdots,m\}$ and $l\in\{1,\hdots,n\}$.
In order to read the memductance values of the memristors in the memristive crossbar array, let us add $n$ grounded voltage sources at the column terminals and ground the row terminals, i.e., $P^B_k=0$ for $k\in\{ 1,\hdots,m\}$. 
The dynamics of \eqref{eqn:general crossbar array} now leads to \vspace{-0.2cm} 
\begin{equation}
    \tfrac{d}{dt} \varphi_{kl} = P_l^A, \label{eqn: reading dynamics}
\end{equation} 
due to the specific structure of the incidence matrix $D$, see \eqref{eqn: incidence matrix}. Moreover, the second equation in \eqref{eqn:general crossbar array} leads to 
\begin{equation}     
-J_k^B =  \sum_{j=1}^n W_{kj}(\varphi_{kj}) P^A_j. \label{eqn: reading} 
\end{equation}

Now, let $\tau>0$ and choose the input voltage signals as 
\begin{equation}
    P_l^A(t) = \left\{
        \begin{array}{ll}
            0 & \quad t\in [0, t_l-2\tau)   \\
            -1 & \quad t \in [t_l-2\tau, t_l-\tau) \\ 
            1 & \quad t \in [t_l-\tau, t_l+\tau) \\
            -1 & \quad t \in [t_l+\tau, t_l+2\tau) \\
            0 & \quad t\in [t_l+2\tau,t_n+2\tau) 
        \end{array}
    \right. \label{eqn: V reading}
\end{equation}
 for all $l\in\{1,\ldots n\}$. Here, $\{t_l\}_{l=1}^n$ is a sequence satisfying $t_1\geq 2\tau$ and $t_{l+1}\geq t_l+4\tau$ for all $l\in\{1,\ldots n-1\}$.
 
From \eqref{eqn: reading dynamics} it then follows that the corresponding flux of each memristor can be computed as
\begin{equation}
   \varphi_{kl}(t) - \varphi_{kl}(0) = \int_0^t P_l^A(s) ds, \quad \varphi_{kl}(0)=\varphi_{kl,0}. \label{eqn: flux reading}
\end{equation}
By \eqref{eqn: V reading} it is clear that the flux has the following property:
\begin{equation}
    \varphi_{kl}(0) = \varphi_{kl}(t_l) = \varphi_{kl}(T) \label{eqn: flux property}
\end{equation} 
for any $T\geq t_l+2\tau$. Hence, the time horizon $T:= t_n+2\tau$ and the function $P:[0,T]\rightarrow \mathds{R}^{n+m}$, defined by $P_k^B=0$ and \eqref{eqn: V reading}, satisfy item 2) of Problem \ref{reading problem}. 

It turns out that $P$ also satisfies item 1), and is thus a solution to Problem \ref{reading problem}. This is stated in the following:
\begin{theorem} \label{thm: reading}
Let $T:=t_n+2\tau$ and define $P:[0,T]\rightarrow \mathds{R}^{n+m}$ by $P_k^B=0$ and \eqref{eqn: V reading}, for $k\in\{ 1,\hdots,m\}$ and $l\in\{1,\hdots,n\}$. For any $\varphi_0\in\mathds{R}^{mn}$, $\mathcal{W}(P,J_{\varphi_0,P})$ is a singleton and $\varphi_{\varphi_0,P}(T) = \varphi_0$. 
\end{theorem}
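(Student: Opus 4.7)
The plan is to exploit the specific structure of the pulse train $P_l^A$: each column voltage is nonzero only on the window $[t_l-2\tau,t_l+2\tau)$, and at the midpoint $t=t_l$ it equals $+1$. Thanks to the spacing condition $t_{l+1}\geq t_l+4\tau$, these windows are pairwise disjoint and, in particular, for every $l\in\{1,\ldots,n\}$ one has $P_l^A(t_l)=1$ and $P_j^A(t_l)=0$ for all $j\neq l$. Simultaneously, a short computation on the signed-area integral in \eqref{eqn: flux reading} over $[0,t_l-2\tau)$, $[t_l-2\tau,t_l-\tau)$ and $[t_l-\tau,t_l)$ shows that the $-\tau$ and $+\tau$ contributions cancel, so that $\varphi_{kl}(t_l)=\varphi_{kl,0}$ as already recorded in \eqref{eqn: flux property}. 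The same cancellation carried out over the remaining interval $[t_l+\tau,t_l+2\tau)$ and the fact that $P_l^A\equiv 0$ afterwards yields $\varphi_{kl}(T)=\varphi_{kl,0}$, which settles item~2 of Problem~\ref{reading problem}.

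For item~1 the idea is to use \eqref{eqn: reading} as a probing equation at the special times $t_l$. Take any $W\in\mathcal{W}(P,J_{\varphi_0,P})$, together with a witness trajectory $\tilde\varphi$ of \eqref{eqn:general crossbar array} satisfying $W(\tilde\varphi(0))=W$. Since the flux dynamics \eqref{eqn: reading dynamics} depend only on $P$, the same flux-return identity gives $\tilde\varphi_{kl}(t_l)=\tilde\varphi_{kl}(0)$ regardless of the initial condition. Substituting into \eqref{eqn: reading} at $t=t_l$ and using the disjointness of the pulse supports then collapses the sum to a single term, so that
\begin{equation}
  -J_k^B(t_l) \;=\; W_{kl}\bigl(\tilde\varphi_{kl}(t_l)\bigr) \;=\; W_{kl}\bigl(\tilde\varphi_{kl}(0)\bigr).
\end{equation}
Thus every nonzero entry of the diagonal matrix $W$ is forced to take the value $-J_k^B(t_l)$, which depends only on the measured terminal data. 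Hence $\mathcal{W}(P,J_{\varphi_0,P})$ contains exactly one element.

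The argument is essentially bookkeeping with the pulse timings, so I do not anticipate a genuine obstacle. The subtleties that require care are (i) verifying the disjointness of the supports of $\{P_j^A\}_{j=1}^n$ at the sampling instants $t_l$, which follows from the spacing $t_{l+1}\geq t_l+4\tau$, and (ii) confirming that the symmetric $(-1,+1,-1)$ shape of each pulse makes the flux trajectory return to its initial value both at $t_l$ (so that the measurement reads the \emph{initial} memductance) and at $T$ (so that the experiment is non-destructive), independently of the unknown $\varphi_0$.
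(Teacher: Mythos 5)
Your proposal is correct and follows essentially the same route as the paper: item~2 via the signed-area cancellation of the $(-1,+1,-1)$ pulse, and item~1 by evaluating \eqref{eqn: reading} at the sampling instants $t_l$, where the disjoint pulse supports collapse the sum to the single term $-J_k^B(t_l)=W_{kl}(\tilde\varphi_{kl}(0))$. The only cosmetic difference is that the paper compares two arbitrary elements of $\mathcal{W}(P,J_{\varphi_0,P})$ and invokes strict monotonicity of $W_{kl}$, whereas you show directly that every entry of any consistent matrix is pinned down by the measured data---an equally valid (arguably slightly cleaner) phrasing of the same argument.
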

\begin{proof}
    We have already shown that $\varphi_{\varphi_0,P}(T) = \varphi_0$. Therefore, we only need to show that $\mathcal{W}(P,J_{\varphi_0,P})$ is a singleton. Take any $W^1, W^2\in \mathcal{W}(P,J_{\varphi_0,P})$. Then, $\exists \varphi^1$ and $\varphi^2$ such that $W(\varphi^1(0))=W^1$, respectively, $W(\varphi^2(0))=W^2$. 
    By definition of our voltage signals and equations \eqref{eqn: reading} and \eqref{eqn: flux property}, it is clear that
\begin{equation}
    -J_k^B(t_l) = W_{kl}(\varphi_{kl}^1(0)) \text{ and } -J_k^B(t_l) = W_{kl}(\varphi_{kl}^2(0)),  
\end{equation}
for all $k\in\{ 1,\hdots,m\}$ and $l\in\{1,\hdots,n\}$. 
Due to strict monotonicity, see Assumption \ref{assumption 2}, this implies that $\varphi_{kl}^1(0) = \varphi_{kl}^2(0)$
for all $k\in\{ 1,\hdots,m\}$ and $l\in\{1,\hdots,n\}$.
Hence, $W^1= W(\varphi^1(0)) = W(\varphi^2(0)) = W^2$. We conclude that $\mathcal{W}(P,J_{\varphi_0,P})$ is a singleton, which proves the theorem.
\end{proof}

Note that, from the proof of Theorem \ref{thm: reading}, it follows that we can \textit{read} the initial memductance value $W_{kl}(\varphi_{kl}(0))$ of each memristor by measuring the current $-J_k^B(t_l)$.

\section{WRITING}
\subsection{Why switches?}
In reading the initial memductance values of the memristors we have not made use of the switches. However, as will be explained next, these switches are crucial in steering the memductances to desired values. 
Here, we will assume, contrary to the previous section, that there are voltage sources at both the column and row terminals, potentially allowing for more steering possibilities.
    With this, the array is such that the memductance values of memristors in the same 
    column (row) depend on each other due to same input on that column (row). 
Now, to show that switches are crucial in steering, we will first assume that $S=I$. Based on the first equation of \eqref{eqn:general crossbar array}, this implies that the set of matrices of memductance values that can be reached from $\varphi_0$ by suitable choice of $P$, is given by 
\begin{align}
    \mathcal{R}(\varphi_0)&=\left\{ \left. X \in \mathds{R}_{>0}^{m\times n}  \right\vert \exists \varphi \in\mathds{R}^{mn} \text{ such that } \right. \\
    &\hspace{0.8cm} \varphi-\varphi_0\in \text{im} (D^\top) \text{ and } X_{kl} = W_{kl}(\varphi_{kl}) \\
    &\hspace{0.7cm}   \text{ for all} \left. k\in\{1,\ldots,m\} \text{ and } l\in\{1,\ldots,n\} \right\}.
\end{align}   
    Second of all, let the set of all desired matrices of memductance values be denoted by 
    \begin{align} 
        \mathcal{D}&=\left\{ \left. X \in \mathds{R}_{>0}^{m\times n}  \right\vert \exists \varphi \in \mathds{R}^{mn} \text{ s.t. } X_{kl} = W_{kl}(\varphi_{kl}) \right. \\
        &\hspace{0.6cm}  \text{ for all} \left. k\in\{1,\ldots,m\} \text{ and } l\in\{1,\ldots,n\} \right\}. \label{eqn: desired memductance matrices set}
    \end{align} 
    This includes all matrices one could create based on the limitations of each memristor separately without considering the limitations the crossbar array dynamics \eqref{eqn:general crossbar array} puts on the possible matrix elements that can be created.  
Using these sets, we can formalize the observation that switches are crucial in writing. 

\begin{proposition} \label{proposition 1}
    Consider a crossbar array characterized by incidence matrix $D$ in \eqref{eqn: incidence matrix} with $m\geq 2$ by $n\geq 2$. If $s_{kl}=1$ for all $k\in\{1,\ldots, m\}$ and $l\in\{1,\ldots, n\}$, then for each $\varphi_0\in\mathds{R}^{mn}$, $\mathcal{R}(\varphi_0)\subsetneq \mathcal{D}$. 
\end{proposition}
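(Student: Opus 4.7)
The plan is to exploit the structural constraint that any $\varphi$ reachable from $\varphi_0$ via the crossbar dynamics must satisfy $\varphi - \varphi_0 \in \text{im}(D^\top)$, while matrices in $\mathcal{D}$ are realized by arbitrary fluxes in $\mathds{R}^{mn}$. If I can show that $\text{im}(D^\top)$ is a proper subspace of $\mathds{R}^{mn}$, then strict monotonicity (hence injectivity) of each scalar map $W_{kl}$ will produce a concrete memductance matrix lying in $\mathcal{D}$ but outside $\mathcal{R}(\varphi_0)$.

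First, I would establish that $\text{rank}(D) = n+m-1$. The upper bound is immediate from $\mathds{1}_{n+m}^\top D = 0$, which holds because each column of $D$ in \eqref{eqn: incidence matrix} contains exactly one $+1$ (in the top block $I_n \otimes \mathds{1}_m^\top$) and exactly one $-1$ (in the bottom block $-\mathds{1}_n^\top \otimes I_m$). The matching lower bound follows from the observation that $D$ is the vertex-edge incidence matrix of the complete bipartite graph $K_{m,n}$, which is connected, so any $n+m-1$ columns indexed by a spanning tree are linearly independent. Consequently $\text{im}(D^\top)$ has dimension $n+m-1$ in $\mathds{R}^{mn}$, and since $mn-(n+m-1) = (m-1)(n-1) \geq 1$ when $m,n\geq 2$, the inclusion $\text{im}(D^\top) \subsetneq \mathds{R}^{mn}$ is strict.

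Next, I would pick any $v \in \mathds{R}^{mn} \setminus \text{im}(D^\top)$, set $\bar\varphi := \varphi_0 + v$, and define $X \in \mathds{R}_{>0}^{m\times n}$ by $X_{kl} := W_{kl}(\bar\varphi_{kl})$. By construction $X \in \mathcal{D}$. Suppose, for contradiction, that $X \in \mathcal{R}(\varphi_0)$. Then there exists $\varphi \in \mathds{R}^{mn}$ with $\varphi - \varphi_0 \in \text{im}(D^\top)$ and $W_{kl}(\varphi_{kl}) = X_{kl} = W_{kl}(\bar\varphi_{kl})$ for all $k,l$. Strict monotonicity of each $W_{kl}$ forces $\varphi_{kl} = \bar\varphi_{kl}$ componentwise, so $\varphi = \bar\varphi$ and thus $v = \varphi - \varphi_0 \in \text{im}(D^\top)$, contradicting the choice of $v$. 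Hence $X \in \mathcal{D} \setminus \mathcal{R}(\varphi_0)$, and since $\mathcal{R}(\varphi_0) \subseteq \mathcal{D}$ is clear from the definitions, the strict inclusion follows.

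The main step is really the rank identity $\text{rank}(D) = n+m-1$; once that is in hand, the remainder is a short injectivity-plus-dimension argument. The only subtlety to flag is that the hypothesis $m,n \geq 2$ is used exactly where it is needed: when $m=1$ or $n=1$, one has $(m-1)(n-1) = 0$ and $\text{im}(D^\top)$ fills the ambient space, in which case the conclusion genuinely fails.
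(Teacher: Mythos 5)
Your proof is correct and follows the same skeleton as the paper's: exhibit a flux $\bar\varphi$ with $\bar\varphi-\varphi_0\notin\text{im}(D^\top)$, define $X$ through $W$ at that flux, and use injectivity of each strictly monotone $W_{kl}$ (the content of the paper's Lemma \ref{lemma 1}) to conclude $X\in\mathcal{D}\setminus\mathcal{R}(\varphi_0)$. The one place you diverge is in certifying that $\text{im}(D^\top)$ is a proper subspace: the paper does this by explicitly exhibiting a nonzero vector $w\otimes v$ in $\ker(D)$, built from $v\perp\mathds{1}_m$ and $w\perp\mathds{1}_n$ (which is exactly where $m,n\geq 2$ enters), whereas you compute $\text{rank}(D)=n+m-1$ by identifying $D$ as the incidence matrix of the connected bipartite graph $K_{m,n}$. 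Both are valid; the paper's route is more self-contained and hands you a concrete linear functional that separates $\varphi-\varphi_0$ from $\text{im}(D^\top)$, while your rank count is slightly less elementary but more informative, since it quantifies the codimension as $(m-1)(n-1)$ and makes transparent why the claim fails for $m=1$ or $n=1$.
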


This proposition thus says that there exist desired memductance matrices that can not be created using the $n+m$ voltage sources at the terminals. Now, before we prove this proposition, we first state and prove the following lemma.
\begin{lemma} \label{lemma 1}
    Consider any $X,Y\in\mathcal{D}$ and let $\varphi,\theta\in\mathds{R}^{mn}$ be such that $X_{kl}=W_{kl}(\varphi_{kl})$ and $Y_{kl}=W_{kl}(\theta_{kl})$. If $X=Y$, then $\varphi=\theta$.
\end{lemma}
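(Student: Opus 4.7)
The plan is to observe that Lemma 1 is essentially an injectivity statement for the memductance functions $W_{kl}$, which follows immediately from Assumption \ref{assumption 2}. I would argue componentwise: for each pair $(k,l)$, the equality $X=Y$ gives $W_{kl}(\varphi_{kl})=X_{kl}=Y_{kl}=W_{kl}(\theta_{kl})$, and I want to conclude $\varphi_{kl}=\theta_{kl}$.

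The key step is to use strict monotonicity of $W_{kl}$ to rule out $\varphi_{kl}\neq \theta_{kl}$. By Assumption \ref{assumption 2}, strict monotonicity means in particular that $(W_{kl}(\varphi)-W_{kl}(\varphi'))(\varphi-\varphi')>0$ whenever $\varphi\neq \varphi'$. So if we had $\varphi_{kl}\neq \theta_{kl}$, the product $(W_{kl}(\varphi_{kl})-W_{kl}(\theta_{kl}))(\varphi_{kl}-\theta_{kl})$ would have to be strictly positive, contradicting the fact that the first factor vanishes. Hence $\varphi_{kl}=\theta_{kl}$ for every index pair, and thus $\varphi=\theta$.

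There is no real obstacle here; the lemma is essentially restating that a strictly monotone function is injective, applied entrywise in the diagonal structure of $W(\cdot)$. The only thing to be careful about is that the indexing convention (that $\varphi_{kl}$ refers to the $(k+m(l-1))$-th entry of $\varphi$) is applied consistently to both $\varphi$ and $\theta$, which it is by hypothesis. I would therefore keep the proof to a few lines, invoking Assumption \ref{assumption 2} directly.
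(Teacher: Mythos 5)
Your proof is correct and follows essentially the same route as the paper: both reduce the claim to the entrywise injectivity of each $W_{kl}$, which follows from the strict monotonicity inequality in Assumption \ref{assumption 2}. The only cosmetic difference is that the paper first sums the products $(W_{kl}(\varphi_{kl})-W_{kl}(\theta_{kl}))(\varphi_{kl}-\theta_{kl})$ over all $(k,l)$ and argues the sum of nonnegative terms being zero forces each term to vanish, whereas you argue each index pair directly; the logical content is identical.
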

\begin{proof}
    If $X=Y$, then we know that \vspace{-0.1cm}
    \begin{equation}
        \sum_{k=1}^{m} \sum_{l=1}^n (W_{kl}(\varphi_{kl})-W_{kl}(\theta_{kl}))(\varphi_{kl}-\theta_{kl}) = 0. \label{eqn: strictly monotone} 
    \end{equation} 
    By Assumption \ref{assumption 2},  \vspace{-0.1cm}
    \begin{equation}
        (W_{kl}(\varphi_{kl})-W_{kl}(\theta_{kl}))(\varphi_{kl}-\theta_{kl}) > 0, \quad \forall \varphi_{kl}\neq \theta_{kl}.
    \end{equation} 
    For the equality in  \eqref{eqn: strictly monotone} to hold, we must hence have that $\varphi_{kl}=\theta_{kl}$ for all $k\in\{1,\ldots, m\}$ and $l\in\{1,\ldots, n\}$. Hence, we conclude that $\varphi=\theta$.
\end{proof}

Using Lemma \ref{lemma 1}, we can now prove the proposition.

\quad \emph{Proof of Proposition \ref{proposition 1}:} 
Take any $\varphi_0\in\mathds{R}^{mn}$. 
We first show that there exists a matrix $X\in\mathcal{D}$ such that $X_{kl} = W_{kl}(\varphi_{kl})$ and $\varphi-\varphi_0 \notin \text{im} (D^\top)$. 
   Now, since $n,m\geq 2$ by assumption, there exist nonzero vectors $v\in\mathds{R}^m$ and $w\in\mathds{R}^n$ such that $v^\top \mathds{1}_m=0$ and $w^\top \mathds{1}_n=0$.
   Using the definition of the incidence matrix \eqref{eqn: incidence matrix}, 
   \begin{align}
       (w^\top\otimes v^\top) D^\top &= (w^\top\otimes v^\top) \begin{bmatrix}
       I_n\otimes \mathds{1}_m &
       -\mathds{1}_n  \otimes I_m
   \end{bmatrix} \\
   &= \begin{bmatrix}
       w^\top \otimes v^\top \mathds{1}_m &
       -w^\top \mathds{1}_n  \otimes v^\top
   \end{bmatrix} =0.
   \end{align}
   Since $w\otimes v \neq 0$, there exists a $\varphi\in\mathds{R}^{mn}$ such that 
   \begin{equation}
       (w^\top\otimes v^\top) (\varphi-\varphi_0) \neq 0.
   \end{equation}
   Then, we know that $\varphi-\varphi_0\notin \text{im}(D^\top)$.
   Now, for this $\varphi$, let $X$ be defined by $X_{kl}=W_{kl}(\varphi_{kl})$. Then, by definition, $X\in\mathcal{D}$. Furthermore, by Lemma \ref{lemma 1}, $\varphi$ is the unique solution $\varphi'$ to $X_{kl}=W_{kl}(\varphi'_{kl})$. Hence, matrix $X\in\mathcal{D}$, but it is not contained in $\mathcal{R}(\varphi_0)$. Therefore, $\mathcal{R}(\varphi_0)\subsetneq \mathcal{D}$. \hfill \QED

So, Proposition \ref{proposition 1} shows that, when all switches are closed, there exist desired matrices $W_d\in\mathcal{D}$ that can not be reached. However, in the next section, it will be shown that the writing problem can be solved by appropriate use of the switches.

\subsection{Controller}
In this section, we assume that the matrix $W(\varphi(0))$ is known, which is possible through reading as discussed in section III.
Now, consider a similar set-up of the crossbar array as for the reading case, where we add grounded voltage sources at the column terminals and ground the row terminals (i.e., $P_k^B=0$, for all $k\in\{1,\ldots,m\}$), but where the switches are not necessarily closed. 
In this case, the dynamics \eqref{eqn: reading dynamics} and \eqref{eqn: reading} are replaced by \vspace{-0.1cm}
\begin{equation}
\begin{aligned}
     \tfrac{d}{dt} \varphi_{kl} &= s_{kl} P_l^A, \quad -J_k^B = \sum_{j=1}^n W_{kj}(\varphi_{kj}) P^A_j s_{kj}. \label{eqn:writing dynamics}
\end{aligned}
\end{equation}
In addition, consider any desired matrix of memductances $W_d\in\mathcal{D}$, with $\mathcal{D}$ as in \eqref{eqn: desired memductance matrices set}, and take any desired $\epsilon>0$. 

Now, we will make full use of the switches and steer the memductance values of the memristors to the desired values one-by-one. In other words, select memristor $M_{kl}$ by letting the switches be such that  
\begin{equation} 
\begin{aligned} \label{eqn: controller switches}
     s_{ij} &= \left\{
        \begin{array}{ll}
            1 & \quad \text{if } i=k, j=l,   \\
            0 & \quad \text{otherwise}.
        \end{array}
    \right. 
    \end{aligned}
\end{equation}
The dynamics \eqref{eqn:writing dynamics} then leads to
\begin{equation} \label{eqn: controller flux}
\begin{aligned}
     \tfrac{d}{dt} \varphi_{ij} &= \left\{
        \begin{array}{ll}
            P_l^A & \quad \text{if } i=k, j=l,   \\
            0 & \quad \text{otherwise},
        \end{array}
    \right. 
    \end{aligned}
\end{equation}
and
\begin{equation} \label{eqn: controller current}
\begin{aligned}
     -J_i^B &= \left\{
        \begin{array}{ll}
            W_{il}(\varphi_{il})P_l^A & \quad \text{if } i=k,   \\
            0 & \quad \text{otherwise}.
        \end{array}
    \right. 
    \end{aligned}
\end{equation}
Hence, only the states, and thus the memductances, of the memristor $M_{kl}$ are changed and those of the other memristors remains unchanged. 

The writing problem for memristor $M_{kl}$ is now to design the input voltage signal at the $l$-th column terminal in such a way that $|W_{d,kl} - W_{kl}(\varphi_{kl}(\hat{T}_{kl}))|\leq\epsilon$ for some time $\hat{T}_{kl}\geq 0$. To this end, we claim that taking the input voltage signal according to the following algorithm solves the problem.
\begin{algorithm}\label{algorithm controller}
    Fix $T>0$, $\alpha>0$. If $| W_{d,kl} -W_{kl}(\varphi_{kl}(0))|> \epsilon$, then:
    \begin{enumerate} 
        \item Apply $P_l^A(t)=1$ to \eqref{eqn: controller flux} for $t\in[0,T]$.
        \item Set $i=1$.
        \item Measure $J_k^B(iT)$. 
        \item If $\left| W_{d,kl} + {J_k^B(iT)}/{P_l^A(iT)} \right|\leq \epsilon$, then stop.
        \item Apply $P_l^A(t)=\alpha \left( W_{d,kl} + {J_k^B(iT)}/{P_l^A(iT)} \right)$ to \eqref{eqn: controller flux} for $t\in(iT,(i+1)T]$.
        \item Set $i=i+1$ and go to step 3.
    \end{enumerate}
\end{algorithm}
The claim is formalized in the following theorem.
\begin{theorem}\label{theorem algorithm}
    Let $W_d\in\mathcal{D}$, $\epsilon>0$, $k\in\{1,\ldots, m\}$ and $l\in\{1,\dots,n\}$, and consider the switches in \eqref{eqn: controller switches}. Let $\alpha>0$ and $T>0$ be such that 
    \vspace{-0.5cm}
    
    \begin{equation}
        \alpha T < \tfrac{2}{\beta}, \label{eqn: alpha beta T condition}
    \end{equation}
    with $\beta>0$ as in Assumption \ref{assumption 2}.
    Then, for any  $\varphi_0\in\mathds{R}^{mn}$, the controller described in Algorithm \ref{algorithm controller} achieves
    \begin{equation} \label{eqn: epsilon bound}
        |W_{d,kl}-W_{kl}(\varphi_{kl}(\hat{T}_{kl}))|\leq\epsilon
    \end{equation} 
    for some $\hat{T}_{kl}\geq 0$, where $\varphi_{kl}(\cdot)$ is the solution to \eqref{eqn: controller flux}.    
\end{theorem}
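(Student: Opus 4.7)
The plan is to exploit the switch configuration \eqref{eqn: controller switches} to reduce the problem to a scalar discrete-time iteration on the flux $\varphi_{kl}$, and then to use the step-size bound $\alpha T < 2/\beta$ as a standard gradient-descent-type condition to establish convergence via a quadratic Lyapunov function.

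\textbf{Step 1: Reduction and identification of the update.} Under \eqref{eqn: controller switches}, the dynamics \eqref{eqn: controller flux} show that only $\varphi_{kl}$ evolves, with $\tfrac{d}{dt}\varphi_{kl} = P_l^A$. On each interval $(iT,(i+1)T]$ the controller applies a constant voltage $u_i$, so $\varphi_{kl}((i+1)T) = \varphi_{kl}(iT) + T u_i$. Denoting $\phi_i := \varphi_{kl}(iT)$ and using \eqref{eqn: controller current} together with the fact that the voltage applied just before time $iT$ equals $u_{i-1}$ (which is nonzero whenever the algorithm has not yet stopped, since then $|u_{i-1}|>\alpha\epsilon$), the quantity $-J_k^B(iT)/P_l^A(iT)$ computed in step 4 is exactly $W_{kl}(\phi_i)$. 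Hence step 5 can be rewritten as $u_i = \alpha\bigl(W_{d,kl}-W_{kl}(\phi_i)\bigr) = \alpha e_i$, yielding the scalar iteration
\begin{equation}
\phi_{i+1} = \phi_i + \alpha T\bigl(W_{d,kl} - W_{kl}(\phi_i)\bigr).
\end{equation}

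\textbf{Step 2: Existence and uniqueness of a target flux.} Because $W_d\in\mathcal{D}$, there exists $\phi^*\in\mathds{R}$ with $W_{kl}(\phi^*)=W_{d,kl}$, and by the strict monotonicity in Assumption \ref{assumption 2} (cf.\ Lemma \ref{lemma 1}) this $\phi^*$ is unique. Hence, setting $\delta_i := \phi_i-\phi^*$, the iteration becomes $\delta_{i+1} = \delta_i - \alpha T\bigl(W_{kl}(\phi_i)-W_{kl}(\phi^*)\bigr)$.

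\textbf{Step 3: Lyapunov decrease.} Consider $V_i := \delta_i^2$. Squaring gives
\begin{equation}
V_{i+1} = V_i - 2\alpha T\, \delta_i\bigl(W_{kl}(\phi_i)-W_{kl}(\phi^*)\bigr) + (\alpha T)^2\bigl(W_{kl}(\phi_i)-W_{kl}(\phi^*)\bigr)^2.
\end{equation}
By strict monotonicity, $\delta_i$ and $W_{kl}(\phi_i)-W_{kl}(\phi^*)$ share the same sign, so the cross term is nonpositive. By $\beta$-Lipschitzness, $|W_{kl}(\phi_i)-W_{kl}(\phi^*)|\le \beta|\delta_i|$, and combining these two facts yields $\bigl(W_{kl}(\phi_i)-W_{kl}(\phi^*)\bigr)^2 \le \beta\, \delta_i\bigl(W_{kl}(\phi_i)-W_{kl}(\phi^*)\bigr)$. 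Substituting,
\begin{equation}
V_{i+1} \le V_i - \alpha T(2-\alpha T \beta)\, \delta_i\bigl(W_{kl}(\phi_i)-W_{kl}(\phi^*)\bigr).
\end{equation}
The hypothesis $\alpha T < 2/\beta$ makes the prefactor strictly positive, so $V_i$ is nonincreasing; hence the sequence $\phi_i$ is bounded and $\delta_i\bigl(W_{kl}(\phi_i)-W_{kl}(\phi^*)\bigr)\to 0$.

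\textbf{Step 4: Convergence and termination.} The map $h(\varphi):=(\varphi-\phi^*)\bigl(W_{kl}(\varphi)-W_{kl}(\phi^*)\bigr)$ is continuous (since $W_{kl}$ is continuous by Assumption \ref{assumption 1}) and, by strict monotonicity, satisfies $h(\varphi)>0$ for $\varphi\neq\phi^*$ and $h(\phi^*)=0$. Since $\phi_i$ is bounded and $h(\phi_i)\to 0$, every convergent subsequence of $\phi_i$ must tend to $\phi^*$, forcing $\phi_i\to\phi^*$ and thus $W_{kl}(\phi_i)\to W_{d,kl}$. Consequently $|e_i|\le\epsilon$ for some finite $i$, at which point the algorithm halts at $\hat T_{kl}=iT$ satisfying \eqref{eqn: epsilon bound}.

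The main obstacle is Step 3: one must carefully pair strict monotonicity (to sign-control the cross term) with Lipschitzness (to upper-bound the quadratic remainder by the same bilinear quantity), so that the step-size condition $\alpha T\beta<2$ yields a genuine Lyapunov decrease; everything else then follows from standard compactness and continuity arguments.
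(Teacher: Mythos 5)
Your proposal is correct and follows essentially the same route as the paper: reduce to the scalar iteration $\varphi^{i}=\varphi^{i-1}+\alpha T\bigl(W_{d,kl}-W_{kl}(\varphi^{i-1})\bigr)$, take the quadratic Lyapunov function $(\varphi^i-\varphi_d)^2$, and combine strict monotonicity with $\beta$-Lipschitzness (your inequality in Step 3 is just the paper's key bound rearranged) so that $\alpha T<2/\beta$ forces a strict decrease. Your Step 4 compactness argument and the check that $P_l^A(iT)\neq 0$ before division are in fact slightly more careful than the paper's appeal to asymptotic stability, but the underlying idea is identical.
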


This theorem thus tells us that the writing problem can be solved for any desired memductance matrix in $\mathcal{D}$, by applying the algorithm to all memristors $M_{kl}$, $k\in\{1,\ldots, m\}$ and $l\in\{1,\dots,n\}$.  

\quad \emph{Proof of Theorem \ref{theorem algorithm}:} 
Since $W_d\in\mathcal{D}$, we know that there exists a $\hat{\varphi}\in\mathds{R}^{mn}$ such that $W(\hat{\varphi})=W_d$ and that it is unique due to Lemma \ref{lemma 1}.
For simplicity of notation, we let $\varphi_d\in\mathds{R}$ denote the solution to $W_{kl}(\varphi_d) = W_{d,kl}$. 

To simplify notation, let $\varphi^i, P^i$ and $J^i$ respectively denote the flux $\varphi_{kl}(iT)$, voltage $P_l^A(iT)$, and current $J_k^B(iT)$ for $i=1,2,\ldots$.  
Combining the controller with the dynamics \eqref{eqn: controller flux}, we then find that  $ \varphi^i = \varphi^{i-1} + T P^i$, 
where
\begin{equation}\label{eqn: P^i}
    P^i = \left \{ \begin{array}{ll}
        1 & \text{if } i=1,  \\
        \alpha  \left( W_{kl}(\varphi_d) + \frac{J^{i-1}}{P^{i-1}} \right) & \text{if } i\geq 2.
    \end{array} \right.
\end{equation}
It then follows from \eqref{eqn: controller current} that \vspace{-0.1cm}
\begin{equation}\label{eqn: W_kl}
    W_{kl}(\varphi^i) = -\tfrac{J^i}{P^i}.
\end{equation} 
Equations \eqref{eqn: P^i} and \eqref{eqn: W_kl} tell us that \vspace{-0.1cm}
\begin{equation}
    P^i = \left \{ \begin{array}{ll}
        1 & \text{if } i=1,  \\
        \alpha  \left( W_{kl}(\varphi_d) -W_{kl}(\varphi^{i-1}) \right) & \text{if } i\geq 2.
    \end{array} \right.
\end{equation}
Clearly, the desired flux value $\varphi_d$ is an equilibrium, i.e.,  if $\varphi^{i-1}=\varphi_d$, then the algorithm will stop and no voltage will be applied, meaning that the flux $\varphi^j=\varphi(jT)$ will be equal to $\varphi_d$ for all $j\geq i$. 
 Now, consider candidate Lyapunov function $L(\varphi^i) = (\varphi^i-\varphi_d)^2$,
 which is non-negative and equal to zero only for $\varphi^i=\varphi_d$. 
Then, we have that
\begin{equation}
\begin{split}
\Delta L &\!:=\! L(\varphi^{i+1})-L(\varphi^i) \\
    &= \! (\varphi^i-\alpha T(W_{kl}(\varphi^i)-W_{kl}(\varphi_d))-\varphi_d)^2\! -\! (\varphi^i-\varphi_d)^2  \\
    &= -2\alpha T (W_{kl}(\varphi^i)-W_{kl}(\varphi_d))(\varphi^i-\varphi_d) \\
    &\quad + \alpha^2 T^2(W_{kl}(\varphi^i)-W_{kl}(\varphi_d))^2.
\end{split}
\end{equation}
\vspace{-0.5cm}

\noindent Now, using that $W_{kl}(\cdot)$ is $\beta-$Lipschitz continuous with constant $\beta>0$ and strictly monotone by Assumption \ref{assumption 2}, we observe that 
\begin{align}
   &-2\alpha T (W_{kl}(\varphi^i)-W_{kl}(\varphi_d))(\varphi^i-\varphi_d) \\
   &\qquad\leq - \tfrac{2\alpha T}{\beta} (W_{kl}(\varphi^i)-W_{kl}(\varphi_d))^2.
\end{align}
Using \eqref{eqn: alpha beta T condition}, this tells us that
\begin{align}
    \Delta L &\leq \left(\alpha^2T^2 -\tfrac{2\alpha T}{\beta}\right) (W_{kl}(\varphi^i)-W_{kl}(\varphi_d))^2 < 0,
\end{align}
for all $\varphi^i \neq \varphi_d$.

Hence, $\varphi_d$ is an asymptotically stable equilibrium, meaning that $\lim_{i\rightarrow\infty} \varphi^i = \varphi_d$. Therefore, the controller described in Algorithm \ref{algorithm controller} is such that \eqref{eqn: epsilon bound} holds  
    for some $\hat{T}_{kl}\geq 0$, which proves the theorem. 
\hfill \QED

With this, we found that the writing problem for the entire crossbar array, i.e., Problem \ref{writing problem}, can be solved by applying the algorithm to all memristors $M_{kl}$ successively, for $k\in\{1,\ldots,m\}$ and $l\in\{1,\ldots,n\}$. Namely, we find the following algorithm solves the problem.

\begin{algorithm}\label{algorithm 2}
  Let $\alpha>0$ and $T>0$ be such that \eqref{eqn: alpha beta T condition} holds for $\beta>0$ as in Assumption \ref{assumption 2}.  
    \begin{enumerate}  
    \item Let $k=1$, $l=1$.
        \item Set the switches as in \eqref{eqn: controller switches}.
        \item Apply Algorithm \ref{algorithm controller}. 
         Let $\hat{T}_{kl}\geq 0$ denote the time such that \eqref{eqn: epsilon bound} holds,
    with $\varphi_{kl}(\cdot)$ the solution to \eqref{eqn: controller flux}.   
        \item Set $t=0$. 
        \item If $k=m$ and $l=n$, then stop. 
        \item If $l\neq n$, let $l=l+1$. Otherwise, let $l=1$ and $k=k+1$. Go to step 2.
    \end{enumerate}
\end{algorithm}

Here, note that the finite time $\hat{T}_{kl}\geq 0$ as seen in step 3 of Algorithm \ref{algorithm 2} exists, for all $k\in \{1,\ldots,m\}$ and $l=\{1,\ldots,n\}$, by Theorem \ref{theorem algorithm}. Furthermore, note that, by choosing the switches appropriately, the algorithm steers each memristor independently to attain the desired memductance value by Theorem \ref{theorem algorithm}. 
As a result, we obtain the following: 

\begin{corollary}
    Algorithm \ref{algorithm 2} solves the writing problem, i.e., Problem \ref{writing problem} within finite time $\hat{T}_{11}+\ldots+\hat{T}_{mn}$.
\end{corollary}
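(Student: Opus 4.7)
The plan is to combine the single-memristor result (Theorem \ref{theorem algorithm}) with a non-interference property that falls out of the switching scheme \eqref{eqn: controller switches}, and then argue inductively across the $mn$ sweeps of Algorithm \ref{algorithm 2}.

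First I would fix an arbitrary initial flux $\varphi_0 \in \mathds{R}^{mn}$ and index the iterations of Algorithm \ref{algorithm 2} by the pairs $(k,l)$ in the order they are visited. For each iteration, the switch matrix is set according to \eqref{eqn: controller switches}, so only $s_{kl} = 1$ and the dynamics reduce to \eqref{eqn: controller flux}. The crucial observation is that $\tfrac{d}{dt} \varphi_{ij} = 0$ whenever $(i,j) \neq (k,l)$, so throughout the entire sub-interval on which Algorithm \ref{algorithm controller} is run, the fluxes (and hence memductances) of all other memristors are held constant. This gives the non-interference property that lets the sweeps be analyzed independently.

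Next, I would invoke Theorem \ref{theorem algorithm} for the memristor $M_{kl}$ being processed: since the hypothesis \eqref{eqn: alpha beta T condition} on $\alpha$ and $T$ holds by assumption, the algorithm terminates at some finite time $\hat{T}_{kl} \geq 0$ with $|W_{d,kl} - W_{kl}(\varphi_{kl}(\hat{T}_{kl}))| \leq \epsilon$. I would then induct on the iteration count: at the beginning of the $(k,l)$-th iteration, all memristors $(k',l')$ processed earlier already satisfy $|W_{d,k'l'} - W_{k'l'}(\varphi_{k'l'})| \leq \epsilon$, and by the non-interference property above, they continue to satisfy this bound throughout (and hence at the end of) the current iteration. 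After the $(m,n)$-th iteration, every memristor satisfies the required $\epsilon$-bound simultaneously.

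Finally, I would define $T := \hat{T}_{11} + \hat{T}_{21} + \cdots + \hat{T}_{mn}$, concatenate the voltage inputs $P$ and switch signals $S$ from each iteration on consecutive sub-intervals of length $\hat{T}_{kl}$, and conclude that this $T$, $P$, and $S$ solve Problem \ref{writing problem}. I expect the only subtle point to be making the non-interference argument precise across iterations, since the flux values that Algorithm \ref{algorithm controller} converges to in an earlier iteration must remain fixed during later iterations; however, this is immediate from \eqref{eqn: controller flux} applied with the switching pattern \eqref{eqn: controller switches}, so no real obstacle arises.
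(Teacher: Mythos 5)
Your proposal is correct and follows essentially the same route as the paper: the paper likewise justifies the corollary by noting that each $\hat{T}_{kl}$ exists and is finite by Theorem \ref{theorem algorithm}, and that the switch pattern \eqref{eqn: controller switches} lets each memristor be steered independently (the fluxes of all unselected memristors being frozen by \eqref{eqn: controller flux}), so the per-memristor bounds accumulate over the $mn$ sweeps and the total time is the sum $\hat{T}_{11}+\ldots+\hat{T}_{mn}$. Your write-up merely makes the induction and the concatenation of $P$ and $S$ explicit, which the paper leaves implicit.
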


\begin{remark}
    Note that the entire crossbar array can be written in $\max\{m,n\}$ steps. Namely, by applying Algorithm \ref{algorithm controller} to multiple memristors at the same time that are not in the same column and row, i.e., $M_{kl}$ and $M_{ij}$ can simultaneously be written when $k\neq i$ and $l\neq j$. This way, the memristors on the 'diagonals' can all be written at once.
\end{remark}

\section{APPLICATIONS}
\subsection{Matrix-vector products} 
An application of crossbar arrays is to compute matrix-vector products $c=Ab$, with $A\in\mathcal{D}\subset\mathds{R}^{m\times n}$ and $b\in\mathds{R}^n$, in one step.
Namely, consider a crossbar array with grounded voltage sources at the column terminals. Using the analysis in Sections III and IV, we can \textit{read} the memductance values of the memristors and then \textit{write} them in such a way that $A_{kl} = W_{d,kl}$ for all $k\in\{1,\ldots,m\}$ and  $l\in\{1,\ldots,n\}$.
Using what we saw in Section IV, we can then \textit{read} the product $c=Ab$ if we define the input voltage signals as $P_l^A(t) = f_l(t)$ for all $l\in\{1,\ldots,n\}$, where
\begin{equation} \label{eqn: application 1}
    f_l(t) = \left\{
        \begin{array}{ll}
            0 & \quad t\in [0, s-2\tau)   \\
            -b_l & \quad t \in [s-2\tau, s-\tau) \\ 
            b_l & \quad t \in [s-\tau, s+\tau) \\
            -b_l & \quad t \in [s+\tau, s+2\tau) \\
            0 & \quad t\in [s+2\tau,\infty). 
        \end{array}
    \right.
\end{equation} 
Here, $s\geq 2\tau$ and $b_l$ is the $l$-th element in the vector $b$. 
It then follows directly from \eqref{eqn: reading} and our analysis in Section III that the currents we measure at the end of the $m$ rows at time $s$ are precisely the entries of vector $c=Ab$, i.e., $c_k=-J_k^B(s)$, $k\in\{1,\ldots,m\}$.

\begin{remark}
    Note that the memductance values are always positive, i.e., $W_{kl}(\varphi_{kl}(t))>0$. 
    In practice, this is not restrictive as any matrix $A\in\mathds{R}^{m\times n}$ can be split as $A=B-C$, for some $B,C\in\mathds{R}_{>0}^{m\times n}$, which can be represented by two appropriately interconnected crossbar arrays, e.g.,
     \cite{Sun2019}.
\end{remark}

\subsection{Least-squares solutions} 
In this section, we take inspiration from \cite{Pates2019}, where a least-squares problem is solved using a resistive electrical circuit.
To this end, another application of crossbar arrays is to use them to compute the solution $x=-A^\dagger b$ to the least squares problem $\min_{x\in \mathds{R}^n} \Vert Ax+b \Vert_2$
for matrices $A\in\mathcal{D}\subset\mathds{R}^{m\times n}$ with full column rank. Hereto, consider a crossbar array with grounded current sources at the column terminals. Without loss of generality, assume that the memductance values of the memristors  are such that $A_{kl} = W_{kl}(\varphi_{0,kl})$ for all $k\in\{1,\ldots,m\}$ and  $l\in\{1,\ldots,n\}$.
If this is not the case, one can use the analysis in Section IV to steer the memductance values to the desired values of the matrix $A$. 

Now, the column terminals being grounded implies that the voltage potential is zero there, giving 
 \begin{align}
     P &=  \begin{bmatrix}  0 &  \cdots & 0 & P_1^B & \cdots & P_m^B\end{bmatrix}^{\top}.
 \end{align} 
The first $n$ lines of  \eqref{eqn:general crossbar array} then tell us that the input current given to each of the columns by the current sources is a linear combination of the voltage potentials measured at the end of the rows in the following way: 
\begin{equation}
    \begin{bmatrix} J_1^A \\  \vdots \\ J_n^A\end{bmatrix} 
    =  -\begin{bmatrix}
W_{11}(\varphi_{11})  & \cdots & W_{m1}(\varphi_{m1})  \\ 
\vdots & \ddots & \vdots \\
W_{1n}(\varphi_{1n}) & \cdots & W_{mn}(\varphi_{mn})
\end{bmatrix}\begin{bmatrix} P_1^B  \\  \vdots \\ P_m^B \end{bmatrix}. \label{eqn: reading current sources}
\end{equation}    

Similar to the previous subsection, let us choose the input current signals $J_l^A(t)=f_l(t)$ with $f_l(t)$ as in \eqref{eqn: application 1} 
for all $l\in\{1,\ldots,n\}$, where $b_l$ is the $l$-th element in the vector $b$.  It then follows directly from  \eqref{eqn: reading} that the voltage potentials we measure at the end of the $m$ rows at time $s$ are the entries of the vector $x=-A^\dagger b$.

\section{CONCLUSION}
We presented flux-controlled memristors and analyzed  memristive  crossbar arrays. In particular, we defined the problems of \textit{reading} and \textit{writing} the memductance values of the memristors in such circuits, i.e., the problems of determining and steering the memductance values. The reading problem was solved by choosing specific input voltage signals to the column terminals of the array, and by measuring the corresponding currents at the row terminals at certain times. The writing problem was solved by means of an algorithm in which input voltage signals to the column terminals of the array are updated by current measurements at the row terminals in such a way that the memductance values are steered towards desired values. The results were then applied to two applications, namely computing matrix-vector products and the solution to least-squares problems.

Future work will focus on expanding the set of applications, e.g., by allowing for matrix-vector multiplications with matrices that have negative elements.
In addition, future work will look into determining and steering the memductance values of charge-controlled memristors in crossbar arrays.

\addtolength{\textheight}{-12cm}

\bibliographystyle{IEEEtran}
\bibliography{References}

% Generated by IEEEtran.bst, version: 1.14 (2015/08/26)
\begin{thebibliography}{10}
\providecommand{\url}[1]{#1}
\csname url@samestyle\endcsname
\providecommand{\newblock}{\relax}
\providecommand{\bibinfo}[2]{#2}
\providecommand{\BIBentrySTDinterwordspacing}{\spaceskip=0pt\relax}
\providecommand{\BIBentryALTinterwordstretchfactor}{4}
\providecommand{\BIBentryALTinterwordspacing}{\spaceskip=\fontdimen2\font plus
\BIBentryALTinterwordstretchfactor\fontdimen3\font minus \fontdimen4\font\relax}
\providecommand{\BIBforeignlanguage}[2]{{%
\expandafter\ifx\csname l@#1\endcsname\relax
\typeout{** WARNING: IEEEtran.bst: No hyphenation pattern has been}%
\typeout{** loaded for the language `#1'. Using the pattern for}%
\typeout{** the default language instead.}%
\else
\language=\csname l@#1\endcsname
\fi
#2}}
\providecommand{\BIBdecl}{\relax}
\BIBdecl

\bibitem{Ribar2021}
L.~Ribar and R.~Sepulchre, ``Neuromorphic control: Designing multiscale mixed-feedback systems,'' \emph{IEEE Control Systems Magazine}, vol.~41, no.~6, pp. 34--63, 2021.

\bibitem{Indiveri2015}
G.~Indiveri and S.-C. Liu, ``Memory and information processing in neuromorphic systems,'' \emph{Proceedings of the IEEE}, vol. 103, no.~8, pp. 1379--1397, 2015.

\bibitem{Mead1990}
C.~Mead, ``Neuromorphic electronic systems,'' \emph{Proceedings of the IEEE}, vol.~78, no.~10, pp. 1629--1636, 1990.

\bibitem{Thomas2013}
A.~Thomas, ``Memristor-based neural networks,'' \emph{Journal of Physics D: Applied Physics}, vol.~46, no.~9, p. 093001, 2013.

\bibitem{Snider2011}
G.~Snider, R.~Amerson, D.~Carter, H.~Abdalla, M.~S. Qureshi, J.~Leveille, M.~Versace, H.~Ames, S.~Patrick, B.~Chandler, A.~Gorchetchnikov, and E.~Mingolla, ``From synapses to circuitry: Using memristive memory to explore the electronic brain,'' \emph{Computer}, vol.~44, no.~2, pp. 37--44, 2011.

\bibitem{Khalid2019}
M.~Khalid, ``Review on various memristor models, characteristics, potential applications, and future works,'' \emph{Transactions on Electrical and Electronic Materials}, vol.~20, no.~4, pp. 289--298, 2019.

\bibitem{Sah2014}
M.~P. Sah, K.~Hyongsuk, and L.~O. Chua, ``Brains are made of memristors,'' \emph{IEEE Circuits and Systems Magazine}, vol.~14, no.~1, pp. 12--36, 2014.

\bibitem{Chua1971}
L.~O. Chua, ``Memristor-the missing circuit element,'' \emph{IEEE Transactions on Circuit Theory}, vol.~18, no.~5, pp. 507--519, 1971.

\bibitem{Corinto2016}
F.~Corinto and M.~Forti, ``Memristor circuits: Flux-charge analysis method,'' \emph{IEEE Transactions on Circuits and Systems I: Regular Papers}, vol.~63, no.~11, pp. 1997--2009, 2016.

\bibitem{Corinto2021}
F.~Corinto, M.~Forti, and L.~O. Chua, \emph{Nonlinear Circuits and Systems with Memristors: Nonlinear Dynamics and Analogue Computing via the Flux-Charge Analysis Method}.\hskip 1em plus 0.5em minus 0.4em\relax Springer International Publishing, 2021.

\bibitem{Chaffey2024}
T.~Chaffey and R.~Sepulchre, ``Monotone one-port circuits,'' \emph{IEEE Transactions on Automatic Control}, vol.~69, no.~2, pp. 783--796, 2024.

\bibitem{Corinto2015}
F.~Corinto, P.~P. Civalleri, and L.~O. Chua, ``A theoretical approach to memristor devices,'' \emph{IEEE Journal on Emerging and Selected Topics in Circuits and Systems}, vol.~5, no.~2, pp. 123--132, 2015.

\bibitem{Huijzer2023}
A.-M. Huijzer, A.~van~der Schaft, and B.~Besselink, ``Synchronization in electrical circuits with memristors and grounded capacitors,'' \emph{IEEE Control Systems Letters}, vol.~7, pp. 1849--1854, 2023.

\bibitem{Sun2019}
Z.~Sun, G.~Pedretti, E.~Ambrosi, A.~Bricalli, W.~Wang, and D.~Ielmini, ``Solving matrix equations in one step with cross-point resistive arrays,'' \emph{Proceedings of the National Academy of Sciences}, vol. 116, no.~10, pp. 4123--4128, 2019.

\bibitem{Sebastian2020}
A.~Sebastian, M.~Le~Gallo, R.~Khaddam-Aljameh, and E.~Eleftheriou, ``Memory devices and applications for in-memory computing,'' \emph{Nature Nanotechnology}, vol.~15, no.~7, pp. 529--544, 2020.

\bibitem{Strukov2008}
D.~B. Strukov, G.~S. Snider, D.~R. Stewart, and R.~S. Williams, ``The missing memristor found,'' \emph{Nature}, vol. 453, no. 7191, pp. 80--83, 2008.

\bibitem{Pates2019}
R.~Pates, C.~Bergeling, and A.~Rantzer, ``On the optimal control of relaxation systems,'' in \emph{IEEE 58th Conference on Decision and Control (CDC)}, 2019, pp. 6068--6073.

\end{thebibliography}

\end{document}